\definecolor{DarkGreen}{rgb}{0.1,0.5,0.1}
\definecolor{DarkRed}{rgb}{0.5,0.1,0.1}
\definecolor{DarkBlue}{rgb}{0.1,0.1,0.5}
\newtheorem{theorem}{Theorem}
\newtheorem*{namedtheorem}{\theoremname}
\newcommand{\theoremname}{testing}
\newtheorem{lemma}{Lemma}
\newtheorem{claim}{Claim}
\newtheorem{corollary}[theorem]{Corollary}
\newtheorem{conjecture}{Conjecture}
\newtheorem*{question*}{Question}
\theoremstyle{definition}
\newtheorem{definition}{Definition}
\theoremstyle{plain}
\newtheorem{Alg}{Algorithm}
\definecolor{DarkGreen}{rgb}{0.1,0.5,0.1}
\definecolor{DarkRed}{rgb}{0.5,0.1,0.1}
\definecolor{DarkBlue}{rgb}{0.1,0.1,0.5}
\newcommand{\ignore}[1]{}
\newcommand{\R}{\mathbb R}
\renewcommand{\floatc@ruled}[2]{\vspace{2pt}{\@fs@cfont \#1.\:} \#2 \par
 \vspace{1pt}}
\title{The Paulsen Problem Made Simple\footnote{\textbf{The results in this full paper were briefly announced in the conference {\em ITCS 2019} but without any accompanying proofs.}}}
\author{Linus Hamilton\thanks{Massachusetts Institute of Technology. Department of Mathematics. Email: {\tt luh@mit.edu}. This work was supported in part by a Fannie and John Hertz Foundation Fellowship.} \and Ankur Moitra\thanks{
Massachusetts Institute of Technology. Department of Mathematics and the Computer Science and Artificial Intelligence Lab. Email: {\tt moitra@mit.edu}. This work was supported in part by NSF CAREER Award CCF-1453261, NSF Large CCF-1565235, a David and Lucile Packard Fellowship and an ONR Young Investigator Award.} }
\begin{document}
\maketitle

\begin{abstract}
\normalsize
The Paulsen problem was once thought to be one of the most intractable problems in frame theory. It was resolved in a recent tour-de-force work of Kwok, Lau, Lee and Ramachandran. In particular, they showed that every $\epsilon$-nearly equal norm Parseval frame in $d$ dimensions is within squared distance $O(\epsilon d^{13/2})$ of an equal norm Parseval frame. We give a dramatically simpler  proof based on the notion of radial isotropic position, and along the way show an improved bound of $O(\epsilon d^2)$. 
\end{abstract}

\thispagestyle{empty}

\newpage

\setcounter{page}{1}

\section{Introduction}

The Paulsen problem was once thought to be one of the most intractable problems in frame theory \cite{CC, C}. 
To state the problem, we need the following definition:

\begin{definition}
We say that a set of vectors $v_1, v_2, \dots, v_n \in \R^d$ is an {\em equal norm Parseval frame} if 
$$\sum_{i =1}^n v_i v_i^T = I \mbox{ and } \|v_i\|_2^2 = \frac{d}{n} \mbox{ for each }i$$
Alternatively, we say that it is an {\em $\epsilon$-nearly equal norm Parseval frame} if
$$(1-\epsilon) I \preceq  \sum_{i =1}^n v_i v_i^T \preceq (1+\epsilon) I \mbox{ and } (1-\epsilon) \frac{d}{n} \leq \|v_i\|_2^2 \leq  (1+\epsilon) \frac{d}{n} \mbox{ for each }i$$
\end{definition}

When we drop the condition on the norm of each vector, we refer to the set of vectors as a {\em Parseval frame} or an {\em $\epsilon$-nearly Parseval frame} respectively. Let $\mathcal{F}$ denote the set of all equal norm Parseval frames. Lastly for two sequences of vectors $V = v_1, v_2, \dots, v_n$ and $W = w_1, w_2, \dots, w_n$ of the same length, we let
$$\mbox{dist}^2(V, W) = \sum_{i=1}^n \|v_i - w_i \|^2$$
With this terminology in hand, the Paulsen problem asks:

\begin{conjecture}
For every $\epsilon$-nearly equal norm Parseval frame $V$, is
$$\inf_{W \in \mathcal{F}} \mbox{{\em dist}}^2(V, W)$$
bounded by a fixed polynomial in $\epsilon$ and $d$?
\end{conjecture}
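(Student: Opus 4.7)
I would answer the conjecture affirmatively, aiming for the $O(\epsilon d^2)$ bound announced in the abstract, by constructing an explicit nearby equal norm Parseval frame $W$ for each $\epsilon$-nearly equal norm Parseval frame $V$. The construction is to apply a suitable invertible linear transformation $A$ to the $v_i$ and then rescale each transformed vector to have norm exactly $\sqrt{d/n}$; that is, set
\[
w_i \;\defeq\; \sqrt{d/n}\cdot \frac{Av_i}{\|Av_i\|}.
\]
Then $\|w_i\|^2 = d/n$ automatically, and the remaining requirement $\sum_i w_i w_i^T = I$ reduces to $\sum_i \frac{(Av_i)(Av_i)^T}{\|Av_i\|^2} = \frac{n}{d} I$, i.e.\ the transformed vectors $\{Av_i\}$ lie in \emph{radial isotropic position}. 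Classical theorems of Forster and of Barthe guarantee that such an $A$ exists whenever the $v_i$ span $\R^d$ in sufficiently general position --- a condition trivially satisfied by any $\epsilon$-nearly Parseval frame for small $\epsilon$ --- so a candidate $W\in\mathcal{F}$ is produced essentially for free.

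Once $A$ is in hand, the problem reduces to estimating $\mathrm{dist}^2(V,W)=\sum_i\|v_i-w_i\|^2$. I would decompose $v_i-w_i$ into the linear deviation $(I-A)v_i$ and a renormalization correction, the latter being small because both $\|v_i\|$ and $\|Av_i\|$ are already close to $\sqrt{d/n}$. Using $\sum_i\|v_i\|^2\approx d$, an operator-norm bound of the form $\|A-I\|\lesssim\sqrt{\epsilon d}$ would propagate into $\sum_i\|(I-A)v_i\|^2\lesssim \epsilon d\cdot d=\epsilon d^2$, with the renormalization contribution of the same order, giving exactly the claimed bound.

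The crux, and the main obstacle, is establishing this stability: that when $V$ is $\epsilon$-nearly Parseval, the transformation putting it in radial isotropic position is close to the identity. I would characterize $S=A^TA$ as the (essentially unique) minimizer of the geodesically convex potential
\[
\Phi(S) \;=\; -\log\det S \;+\; \frac{d}{n}\sum_{i=1}^n \log\langle v_i, Sv_i\rangle
\]
on the positive definite cone, so that the first-order condition $\nabla\Phi(S)=0$ recovers the radial isotropic condition exactly. A direct computation gives $\|\nabla\Phi(I)\|=O(\epsilon)$ when $V$ is $\epsilon$-nearly Parseval. Converting this gradient bound into a distance bound on the minimizer requires a quantitative convexity estimate for $\Phi$ near $S=I$, and tracking the conditioning of the Hessian (in the affine-invariant geodesic metric on positive definite matrices) in terms of $d$ is the delicate step; it is this analysis that dictates the final polynomial dependence on $d$ in the theorem.
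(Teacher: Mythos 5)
Your construction is the same as the paper's (renormalize after applying a linear map $A$ that puts the vectors in radial isotropic position), but your plan for the crucial distance bound goes through a claim that is false in general: that the transformation $A$ satisfies an operator-norm bound $\|A-I\|\lesssim \sqrt{\epsilon d}$ whenever $V$ is $\epsilon$-nearly Parseval. The obstruction is that the near-Parseval and near-equal-norm conditions do \emph{not} place the vectors in quantitatively general position. A counting argument on traces only shows that $k$ of the $v_i$ spanning an $r$-dimensional subspace forces $r\geq \frac{1-\epsilon}{1+\epsilon}\cdot\frac{kd}{n}$; when $kd/n$ sits just above an integer this permits $r<kd/n$, i.e.\ the coefficient vector $c=(d/n,\dots,d/n)$ can lie outside (or on the boundary of) the basis polytope of Definition~\ref{def:basis}, in which case by Theorem~\ref{thm:barthe} \emph{no} transformation achieves radial isotropic position; for configurations merely close to such degeneracies the required $A$ exists but has unbounded condition number. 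Correspondingly, your potential $\Phi$ can have minimizers escaping to infinity and a Hessian with no uniform lower bound — the "delicate step" you defer is not just delicate, it is exactly the difficulty that forced Kwok--Lau--Lee--Ramachandran into smoothed analysis, and it cannot be resolved by the near-Parseval hypothesis alone. Relatedly, your claim that general position is "trivially satisfied" is wrong, which is why the paper must first perturb the (normalized) vectors to make every $d$ of them linearly independent before invoking Barthe's theorem.

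The paper's route around this is structurally different and is the actual content of the argument: after reducing (via the SVD and orthogonal invariance of radial isotropic position) to the case where $M$ is diagonal with sorted nonnegative entries $\lambda_1\geq\cdots\geq\lambda_d\geq 0$, it never controls the size of $M$ at all. Instead, Claim~\ref{claim:order} shows the entrywise squares of the renormalized vectors majorize those of the originals, and Lemma~\ref{lem:maintech} bounds the aggregate movement by the transport functional $\mathcal{T}$, whose linearity lets one sum over $i$ and evaluate the result using only the two facts that $U$ is $O(\epsilon)$-nearly Parseval (column sums of squares at most $1+4\epsilon$ before) and $MU$ is in radial isotropic position (column sums of squares at least $1-\gamma'$ after). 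This yields $\mbox{dist}^2(U,W)=O(\epsilon d^2)$ uniformly over all admissible $M$, however ill-conditioned. To repair your proposal you would need to replace the operator-norm stability analysis with an argument of this aggregate, scale-insensitive type; as written, the plan does not close.
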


Early results gave bounds on the squared distance that were polynomial in $\epsilon$, $d$ and $n$ \cite{CC, CFM}. Cahill and Casazza \cite{CC} showed that (up to constant factors) it is equivalent to another fundamental problem in operator theory called the {\em projection problem}: 

\begin{conjecture} Let $e_1, e_2, \cdots, e_n$ be the standard basis vectors in $\R^n$. Given a projection $P$ onto a $d$-dimensional subspace that satisfies 
$$(1-\epsilon) \frac{d}{n} \leq \| Pe_i\|_2^2 \leq (1+\epsilon) \frac{d}{n} \mbox{ for each } i$$
is there a projection $Q$ with $\|Q e_i \|_2^2 = \frac{d}{n}$ and
$$\sum_{i=1}^n \|Pe_i - Qe_i\|_2^2$$
bounded by a fixed polynomial in $\epsilon$ and $d$?
\end{conjecture}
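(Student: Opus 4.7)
The plan is to reduce Conjecture 2 to Conjecture 1 (the Paulsen problem) via the standard correspondence between Parseval frames in $\R^d$ and rank-$d$ orthogonal projections on $\R^n$, and then invoke the Paulsen bound that this paper sets out to prove. The correspondence is direct: arrange the frame $v_1,\ldots,v_n \in \R^d$ as the columns of a $d \times n$ matrix $V$. Then the Parseval condition $\sum_i v_iv_i^T = I$ becomes $VV^T = I_d$, and consequently $P := V^T V$ is an orthogonal projection of rank $d$ on $\R^n$. Crucially, $Pe_i = V^T v_i$, so $\|Pe_i\|^2 = e_i^T V^T V e_i = \|v_i\|^2$; near-equal frame norms correspond exactly to near-equal diagonal entries of $P$. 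Conversely, every rank-$d$ projection on $\R^n$ factors as $V^T V$ for some $V$ with $VV^T = I_d$, whose columns form a Parseval frame.

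Given $P$ as in Conjecture 2, extract the corresponding $\eps$-nearly equal norm Parseval frame $V$ and apply Conjecture 1 (the statement the paper proves) to obtain an equal norm Parseval frame $W$ with $\mathrm{dist}^2(V,W)$ bounded by a polynomial in $\eps$ and $d$. Let $Q := W^T W$: this is a rank-$d$ projection with $\|Qe_i\|^2 = \|w_i\|^2 = d/n$, exactly the projection Conjecture 2 asks for. It remains to compare the two notions of distance. Writing
\[
Pe_i - Qe_i \;=\; V^T v_i - W^T w_i \;=\; V^T(v_i - w_i) + (V - W)^T w_i
\]
and summing squared norms, the first piece contributes at most $\|V^T\|_{\mathrm{op}}^2 \cdot \mathrm{dist}^2(V,W) = \mathrm{dist}^2(V,W)$ since $VV^T = I_d$. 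The second piece equals $\sum_i w_i^T(V-W)(V-W)^T w_i = \mathrm{tr}\!\bigl((V-W)(V-W)^T\bigr) = \|V - W\|_F^2 = \mathrm{dist}^2(V,W)$, using $\sum_i w_i w_i^T = I_d$. Hence $\sum_i \|Pe_i - Qe_i\|^2 \leq 4\,\mathrm{dist}^2(V,W)$, and the projection-side bound inherits the polynomial dependence from Paulsen.

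The hard part is of course Conjecture 1 itself, which the abstract promises to establish using radial isotropic position. My plan there is, given an $\eps$-nearly equal norm Parseval frame $V$, to apply a carefully chosen linear preconditioner $T$ (the one placing the radially rescaled vectors $Tv_i/\|Tv_i\|$ into isotropic position) and argue quantitatively that (i) $T$ is $O(\eps)$-close to a scaled identity and (ii) this closeness translates into an $O(\eps\,d^2)$ bound on the squared distance from $V$ to the resulting equal norm Parseval frame. The obstacle I most anticipate is tight control over the preconditioner near \emph{degenerate} configurations, where some $v_i$ lies close to a lower-dimensional invariant subspace and radial rescaling threatens to amplify its perturbation beyond what an $O(\eps\,d^2)$ bound can tolerate.
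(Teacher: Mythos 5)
Your reduction is correct and is exactly the route the paper takes: it treats the projection problem as equivalent (via the standard dictionary $P=V^TV$, $VV^T=I_d$, attributed to Cahill--Casazza) to the Paulsen problem, which is then settled by the main theorem, and your explicit computation of the distance comparison (yielding $\sum_i\|Pe_i-Qe_i\|^2\leq 4\,\mathrm{dist}^2(V,W)\leq 80\epsilon d^2$) is a valid instantiation of that equivalence. Your closing sketch of how to prove the Paulsen bound itself differs from the paper's actual argument (which perturbs to general position, invokes Barthe's theorem, and uses a majorization/transport estimate rather than showing the preconditioner is close to a scaled identity), but that lies outside the statement at hand.
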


After fifteen years of being listed as a major open problem in frame theory \cite{C, M, CL}, Kwok, Lau, Lee and Ramachandran \cite{KLLR} gave the first bound that was polynomial in $\epsilon$ and $d$. Through a tour-de-force utilizing operator scaling, connections to dynamical systems and ideas from smoothed analysis, they proved that the  squared distance is at most $O(\epsilon d^{13/2})$. Their paper was $104$ pages long and highly complex. Our main result is a dramatically simpler proof of the Paulsen conjecture, that also yields a much better bound:

\begin{theorem}[Main]\label{thm:main}
For any $\epsilon$-nearly equal norm Parseval frame $V$, there is an equal norm Parseval frame $W$ with $$\mbox{{\em dist}}^2(V, W) \leq 20 \epsilon d^2$$
\end{theorem}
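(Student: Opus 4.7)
The plan is to construct $W$ by placing $V$ in \emph{radial isotropic position}. By a classical theorem of Forster (generalized by Barthe), for any sufficiently non-degenerate collection $V = (v_1, \ldots, v_n)$ there exists an invertible linear map $T$ such that the rescaled vectors
\[
w_i \ := \ \sqrt{d/n}\,\cdot\,\frac{T v_i}{\|T v_i\|}
\]
form an equal norm Parseval frame: indeed $\|w_i\|^2 = d/n$ by construction, and the defining property of $T$ is precisely that $\sum_i w_i w_i^\top = I$. Thus it suffices to show that $T$ can be chosen close to the identity when $V$ is $\epsilon$-nearly equal norm Parseval, and then to estimate $\sum_i \|w_i - v_i\|^2$ in terms of how much $T$ deviates from $I$ and how much the $\|v_i\|$ deviate from $\sqrt{d/n}$.

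To produce $T$ and to obtain its stability, I would use the variational characterization of radial isotropic position. Setting $A = T^\top T$, the correct $A$ is the critical point on the cone of positive-definite matrices of the concave function
\[
F(A) \ := \ \log \det A \ - \ \frac{d}{n} \sum_{i=1}^n \log\bigl(v_i^\top A v_i\bigr),
\]
whose Euler--Lagrange equation $A^{-1} = \tfrac{d}{n}\sum_i v_i v_i^\top/(v_i^\top A v_i)$ is exactly the radial isotropic condition. The $\epsilon$-near Parseval and near equal-norm hypotheses imply that $\nabla F(I) = I - \tfrac{d}{n}\sum_i v_i v_i^\top/\|v_i\|^2$ has operator norm $O(\epsilon)$. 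Combined with a quantitative concavity estimate for $F$ near $I$, this will force the maximizer $A^*$, and hence $T = (A^*)^{1/2}$, to lie in a controlled neighborhood of the identity. The Hessian of $-F$ at $I$ in direction $H$ equals $\mathrm{tr}(H^2) - \tfrac{d}{n}\sum_i (v_i^\top H v_i)^2/\|v_i\|^4$, which is nonnegative by Cauchy--Schwarz plus the Parseval condition, with the sole trivial null direction $H \propto I$ coming from the invariance $F(cA) = F(A)$; restricting to its orthogonal complement gives strong concavity.

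The final step is to convert the bound on $\|T - I\|$ into a bound on $\mbox{dist}^2(V, W)$. Writing $w_i - v_i = \bigl(w_i - \sqrt{d/n}\, v_i/\|v_i\|\bigr) + \bigl(\sqrt{d/n}\, v_i/\|v_i\| - v_i\bigr)$, the second piece is controlled by the near equal-norm hypothesis and sums to $O(\epsilon d)$; the first piece depends on how $T$ perturbs the unit directions of the $v_i$'s and, to leading order, sums to at most $\mathrm{tr}\bigl((T - I)^\top (T - I) \sum_i v_i v_i^\top\bigr) \approx \|T - I\|_F^2$. Plugging in the stability bound and keeping track of constants yields $\mbox{dist}^2(V, W) \leq 20 \epsilon d^2$.

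The main obstacle I anticipate is obtaining the sharp quantitative stability for $T$: $F$ has the trivial scaling null direction, so one must work modulo it, and the lower bound on the Hessian in the remaining directions must be essentially dimension-independent (losing factors of $d$ here would worsen the final bound). A secondary annoyance is handling the non-degeneracy precondition of Forster's theorem, which I would deal with by a small-perturbation / limiting argument. Once these are in hand, everything reduces to the short distance computation in the previous paragraph.
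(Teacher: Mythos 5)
You have correctly identified the paper's starting point: normalize the $v_i$ to length $\sqrt{d/n}$, apply the Barthe/Forster transformation $T$ that puts them in radial isotropic position, renormalize, and handle degenerate configurations by an infinitesimal perturbation. The divergence — and the fatal gap — is in your middle step, where you propose to show that $T$ is close to the identity and to bound $\mbox{dist}^2(V,W)$ by $\|T-I\|_F^2$. This is false: the radial isotropic transformation of an $\epsilon$-nearly equal norm Parseval frame need not approach the identity (or any scalar matrix) as $\epsilon\to 0$. Concretely, take $d=2$, $n=4$ and, for small $\delta>0$, the vectors $v_{1,2}=\tfrac{1}{\sqrt2}(\cos\delta,\pm\sin\delta)$ and $v_{3,4}=\tfrac{1}{\sqrt2}(\pm\sin 2\delta,\cos 2\delta)$. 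These have $\|v_i\|^2=d/n$ exactly and $\sum_i v_iv_i^T=I+(\sin^2 2\delta-\sin^2\delta)\,\mathrm{diag}(1,-1)$, so they form an $O(\delta^2)$-nearly equal norm Parseval frame; yet the diagonal map $T=\mathrm{diag}(a,b)$ putting them in radial isotropic position must satisfy $(b/a)^2=\tan(2\delta)/\tan(\delta)\to 2$, so $T$ stays a bounded distance from the scalar matrices while $\epsilon\to 0$. The structural reason your stability argument cannot be repaired is that the strong concavity modulus of $F$ orthogonal to the scaling direction is \emph{not} bounded below by any function of $\epsilon$ and $d$: it degenerates as the directional configuration approaches the boundary of the basis polytope (e.g.\ as clusters of $n/d$ vectors become parallel), exactly the regime the example exploits. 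A sanity check pointing the same way: if $\|T-I\|=O(\epsilon)$ held, your final computation would give $\mbox{dist}^2(V,W)=O(\epsilon^2 d)$, contradicting the $\Omega(\epsilon d)$ lower bound of Cahill and Casazza for small $\epsilon$. (A secondary issue: $F(A)=\log\det A-\tfrac{d}{n}\sum_i\log(v_i^TAv_i)$ is a sum of a concave and a convex term in $A$, so the global concavity you invoke is not established in this parametrization; Barthe's functional is convex in the auxiliary variables $t_i$, not in $A$.)

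What the paper does instead is precisely designed to avoid any bound on $\|T-I\|$. After reducing to the case where $M=T$ is diagonal with sorted entries $\lambda_1\geq\cdots\geq\lambda_d$, it shows that the entrywise squares of the renormalized images majorize those of the originals (Claim~\ref{claim:order}), and then uses the linearity of the transport functional $\mathcal{T}$ together with the two facts that $\sum_i u_iu_i^T\approx I$ (input nearly Parseval) and $\frac{d}{n}\sum_i \widehat{w}_i\widehat{w}_i^T=I$ (output exactly in radial isotropic position) to bound the \emph{aggregate} movement of the renormalized vectors by $O(\epsilon d^2)$, for an arbitrary such $M$. In the example above this is exactly what happens: $M$ is far from the identity, but each unit vector it acts on is nearly an eigendirection, so the renormalized vectors barely move. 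You would need to replace your stability step with an argument of this kind that bounds the displacement of the normalized vectors directly rather than the displacement of $T$.
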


In terms of lower bounds, Cahill and Casazza \cite{CC} gave a family of examples of $\epsilon$-nearly equal norm Parseval frames where the squared distance to the closest equal norm Parseval frame is at least $\Omega(\epsilon d)$. It is an interesting open question to close this gap. 

Our main idea is to make use of the notion of {\em radial isotropic position}\footnote{This concept goes by many other names in the literature, such as {\em well-spread vectors} \cite{DSW} or {\em geometric scaling for rank one Brascamp-Lieb datum} \cite{GGOW}. The name we use here originated in \cite{HM}.}. In the next section, we define it formally. But to understand it informally, it is useful to compare it to the more familiar notion of placing a set of vectors in isotropic position: Given a set of vectors  $V = v_1, v_2, \dots, v_n \in \R^d$, is there an invertible affine transformation that generates a new set of vectors $Y = Av_1 + b, Av_2 + b, \dots, A v_n + b$ that has mean zero and identity covariance? It is well known that there is such a transformation if and only if $\sum_i v_i v_i^T$ has full rank. 

However such a transformation can also stretch out some directions much more than others \---- e.g. if all but one of the vectors in $V$ are contained in a $d-1$-dimensional subspace. In this case, the set of vectors after applying the transformation would be quite far from where it started out, in total squared distance. Informally, radial isotropic position asks for a linear transformation $A$ so that the {\em renormalized} vectors $w_i = A v_i/ \|A v_i\|$ have the property that $\sum_i w_i w_i^T$ is a scalar multiple of the identity. The transformation is now nonlinear but is particularly well suited for constructing a close by equal norm Parseval frame. 

One can now ask the same sort of question as before: When can a set of vectors be placed in radial isotropic position? Barthe \cite{B} gave a complete characterization of when this is  and is not possible which in turn plays a key role in our proof. It turns out that a sufficient condition is that every $d$ vectors are linearly independent. Now we construct an equal norm Parseval frame as follows: First we renormalize the vectors in $V$ and then we perturb them. Perturbations play a delicate role in \cite{KLLR}. They give a dynamical system which constructs an equal norm Parseval frame from an $\epsilon$-nearly equal norm Parseval frame as its input. In order to bound the total squared distance between the input and output, they need to lower bound the convergence rate. They do this through a certain pseudorandom property (Definition $4.3.2$) which they show holds when the input is appropriately perturbed. In our proof, all we need is that the perturbations do not move the set of points by too much in squared distance and that afterwards every $d$ of them are linearly independent\footnote{In particular, essentially all sufficiently small perturbations would work for us. It could even be an infinitesimal perturbation because we do not need any quantitative bounds on how far they are from having a non-trivial linear dependence.}. The latter condition guarantees that there is a linear transformation that places them in radial isotropic position. Let $W$ be the set of vectors, after applying the linear transformation and renormalizing. By definition, it is an equal norm Parseval frame. Our main technical contribution is in bounding the squared distance between $V$ and $W$, which we do through some elementary but subtle algebraic manipulations. 

Taking a step back, the notion of radial isotropic position seems quite powerful and mysterious but has thus far only found a handful of applications. Forster \cite{F} used it to prove a remarkable lower bound in communication complexity (by lower bounding the sign rank of the Hadamard matrix). Hardt and Moitra \cite{HM} gave the first algorithm for computing the transformation that places a set of vectors in radial isotropic position (under a slight strengthening of Barthe's conditions). They also gave applications to linear regression in the presence of outliers. Dvir, Saraf and Wigderson \cite{DSW} used it to prove superquadratic lower bounds for $3$-query locally correctable codes over the reals. Here we use it to give a simple proof of the Paulsen conjecture. Are there other exciting applications waiting to be discovered? 

\subsection*{Connections to Operator Scaling and the Brascamp-Lieb Inequality}

Radial isotropic position is itself a special case of the more general notion of {\em geometric position} \cite{Ba, B} where we are given an $n$ tuple of linear transformations $B_1, B_2, \dots, B_n$ of dimensions $d_1 \times d, d_2 \times d, \dots, d_n \times d$ and a nonnegative vector $c$ of dimension $n$ with $\sum_{i=1}^n c_i d_i = d$ and the goal is to find square, invertible matrices $A_1, A_2, \dots, A_n$ and $A$ so that
$$\sum_{i=1}^n c_i \Big ( A_i^{-1} B_i A\Big)^T \Big ( A_i^{-1} B_i A\Big) = I \mbox{ and } \Big ( A_i^{-1} B_i A\Big) \Big ( A_i^{-1} B_i A\Big)^T = I \mbox{ for each } i$$
If we set $d_i =1$ for all $i$, then each linear transformation $B_i$ can be written as the inner-product with some vector $v_i$. Now if we also set $c_i = \frac{d}{n}$ for all $i$, it is easy to check that $A$ places the set of vectors $v_1, v_2, \dots, v_n$ in radial isotropic position. 

It turns out that having $A_1, A_2, \dots, A_n$ and $A$ that place $B_1, B_2, \dots, B_n$ in geometric position with respect to the vector $c$ yields an explicit expression for the best constant $C$ for which the inequality
$$\int_{x \in \R^d} \prod_{i=1}^n \Big (f_i (B_i x)\Big)^{c_i} dx \leq C \prod_{i=1}^n \Big ( \int_{x_i \in \R^{d_i}} f_i(x_i) dx_i \Big)^{c_i}$$
holds over all $m$ tuples of nonnegative functions $f_1, f_2, \dots, f_m$ \cite{BCCT}. 
This is called the Brascamp-Lieb inequality. 

Finally, in terms of how to compute $A_1, A_2, \dots, A_n$ and $A$, a popular approach is {\em operator scaling} \cite{G} and there has been considerable recent progress in bounding the number of iterations it needs \cite{GGOW1, GGOW}. As we mentioned, Kwok, Lau, Lee and Ramachandran \cite{KLLR} used operator scaling to solve the Paulsen conjecture. In this sense, our approach and theirs are closely related in that they both revolve around algorithms (in our case the ellipsoid algorithm) for computing radial isotropic position. Perhaps the main technical divergence is that they track how the squared distance changes after each iteration of operator scaling, while we are able to bound the squared distance just based on transformation that places $v_1, v_2, \dots, v_n$ into radial isotropic position. It is also worth mentioning that if instead of proving existence of a nearby equal norm Parseval frame, we want to find it up to some target precision $\delta$, the approaches based on operator scaling typically require the number of iterations to be polynomial in $1/\delta$. In contrast, we will give algorithms whose running time is polynomial in $\log 1/\delta$.

\section{Radial Isotropic Position and the Proof}

First we introduce some of the basic concepts and results about radial isotropic position. We will do so in slightly more generality than we will ultimately need. 

\begin{definition}
We say that a set of vectors $u_1, u_2, \dots, u_n \in \R^d$ is in {\em radial isotropic position} with respect to a coefficient vector $c \in \R^n$ if
$$\sum_{i=1}^n c_i \Big (\frac{u_i}{\|u_i\|} \Big) \Big (\frac{u_i}{\|u_i\|} \Big)^T = I$$
\end{definition}

Note that if we take the trace of both sides in the expression, we get the necessary condition that $\sum_{i=1}^n c_i = d$. In fact we will only ever consider the case when each $c_i = \frac{d}{n}$. We will also need the following key definition:

\begin{definition}\cite{E}\label{def:basis}
For a set $U$ of vectors $u_1, u_2, \dots, u_n \in \R^d$, its {\em basis polytope} is defined as
$$\mathcal{B}(U) = \Big \{ c \in \R^n \mbox{ s.t. } \sum_{i=1}^n c_i = d \mbox{ and for all } A \subseteq [n], \mbox{dim}\Big(\mbox{span}\{u_i\}_{i \in A}\Big) \geq \sum_{i \in A} c_i  \Big \}$$
\end{definition}

Now we are ready to state Barthe's theorem:

\begin{theorem}\cite{B}\label{thm:barthe}
A set of vectors $U = u_1, u_2, \dots, u_n \in \R^d$ can be put into radial isotropic position with respect to $c$ by a linear transformation if and only if $c \in \mathcal{B}(U)$
\end{theorem}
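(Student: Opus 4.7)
The plan is to prove the theorem variationally. I would introduce the potential
\[
F(B) \;=\; \sum_{i=1}^n c_i \log(u_i^T B u_i) \;-\; \log \det B
\]
on positive definite $B \succ 0$. Because $\sum_i c_i = d$, one checks $F(tB) = F(B)$, so $F$ descends to the slice $\{\det B = 1\}$. Differentiating, $\nabla_B F = \sum_i c_i\, u_i u_i^T/(u_i^T B u_i) - B^{-1}$, so any critical point satisfies $\sum_i c_i\, u_i u_i^T/(u_i^T B u_i) = B^{-1}$; conjugating by $B^{1/2}$ yields $\sum_i c_i\, w_i w_i^T/\|w_i\|^2 = I$ with $w_i = B^{1/2} u_i$, which is exactly radial isotropic position for $A = B^{1/2}$. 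The whole theorem therefore reduces to showing that $F$ attains its infimum if and only if $c \in \mathcal{B}(U)$.

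For the ``only if'' direction, assume such an $A$ exists and set $w_i = Au_i/\|Au_i\|$. Since $A$ is invertible, $\dim\mathrm{span}\{u_i : i \in S\} = \dim\mathrm{span}\{w_i : i \in S\} =: k$ for any $S \subseteq [n]$. Let $P$ be the orthogonal projection onto the $(d-k)$-dimensional complement of this span. Taking $\mathrm{tr}(P\,\cdot)$ on both sides of $I = \sum_i c_i w_i w_i^T$ gives $d - k = \sum_{i \notin S} c_i \|P w_i\|^2 \le \sum_{i \notin S} c_i$, so $\sum_{i \in S} c_i \le k$. Combined with the trace identity $\sum_i c_i = d$ this is precisely the basis polytope condition.

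For the ``if'' direction I would show coercivity of $F$ on the slice $\det B = 1$. Any escaping sequence $B_k$ is controlled by a flag $0 = V_0 \subsetneq V_1 \subsetneq \cdots \subsetneq V_r = \R^d$ capturing the subspaces on which eigenvalues of $B_k$ cluster at successively higher scales $t_1^{(k)} \le \cdots \le t_r^{(k)}$. Setting $s_j = \log t_j^{(k)}$ and classifying each $u_i$ by the smallest $V_j$ containing it, a direct computation gives
\[
F(B_k) \;=\; \sum_{j=1}^{r} \gamma_j \, s_j \;+\; O(1), \qquad \gamma_j \;=\; \sum_{i\,:\,u_i \in V_j \setminus V_{j-1}} c_i,
\]
subject to the determinant constraint $\sum_j (\dim V_j - \dim V_{j-1}) s_j = 0$. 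Abel summation, together with the basis polytope inequality $G_m := \sum_{j \le m} \gamma_j \le \dim V_m$ (applied to $\{i : u_i \in V_m\}$), rewrites the leading term as $\sum_{k=2}^r (s_k - s_{k-1})(\dim V_{k-1} - G_{k-1})$, each summand of which is nonnegative and diverges to $+\infty$ along any genuine escape. Hence the infimum is attained and the critical-point computation from the first paragraph produces the desired $A$.

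The main obstacle is the coercivity analysis: one has to argue that only flag-adapted degenerations of $B$ matter, track the $u_i^T B u_i$ asymptotics carefully enough that the error is genuinely $O(1)$, and handle boundary points of $\mathcal{B}(U)$ where several basis polytope inequalities are tight. The rest — the first-order condition and the projection argument for necessity — is essentially routine.
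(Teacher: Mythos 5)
The paper never proves this statement: it is imported wholesale from Barthe \cite{B}, so there is no internal proof to compare against. Your variational plan is the standard route (essentially Barthe's own argument, which also underlies Forster's theorem and the Hardt--Moitra algorithm), and most of it is sound. The ``only if'' direction --- testing $I=\sum_i c_i w_iw_i^T$ against the projection onto the orthogonal complement of $\mathrm{span}\{w_i\}_{i\in S}$ --- is correct and complete. The first-order computation identifying critical points of $F$ with radial isotropic position is right, and the coercivity argument (flag adapted to the degenerating eigenvalues, Abel summation against the determinant constraint) does go through when $c$ lies in the \emph{relative interior} of $\mathcal{B}(U)$: there each deficit $\dim V_m - G_m$ is strictly positive and some gap $s_{j+1}-s_j$ diverges, so $F\to+\infty$ and the infimum is attained at a positive definite critical point. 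That interior case is the only one this paper ever uses --- for $U$ in general position with $n>d$, the uniform $c_i=d/n$ is strictly inside the polytope, which is exactly the $(1-\alpha)\mathcal{B}(U)$ computation in the Corollary.

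The boundary case, which you flag as something to be ``handled,'' is not a removable technicality: the ``if'' direction is actually \emph{false} for some boundary points of $\mathcal{B}(U)$ as the theorem is literally stated. Take $d=2$, $u_1=u_2=e_1$, $u_3=e_2$, $u_4=e_1+e_2$, and $c=(\tfrac12,\tfrac12,\tfrac12,\tfrac12)$. This $c$ is the average of the basis indicators of $\{1,3\},\{1,4\},\{2,3\},\{2,4\}$ and satisfies all of Edmonds' inequalities, so $c\in\mathcal{B}(U)$. But if $\sum_i c_i w_iw_i^T=I$ with $w_i=Au_i/\|Au_i\|$, then $w_1=w_2$, and testing against the unit vector orthogonal to $w_1$ forces $\|Pw_3\|^2=\|Pw_4\|^2=1$, i.e.\ both $Ae_2$ and $A(e_1+e_2)$ orthogonal to $Ae_1$; hence $\langle Ae_1, Ae_1\rangle=0$, contradicting invertibility of $A$. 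Consequently the infimum of your $F$ is genuinely not attained there, and no refinement of the coercivity analysis can cover all of $\mathcal{B}(U)$. A correct writeup must either restrict the ``if'' direction to the relative interior (which suffices for everything in this paper) or, at the boundary, replace the conclusion by the direct-sum decomposition along critical subspaces that Barthe and Bennett--Carbery--Christ--Tao carry out by induction on dimension.
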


Some further remarks: $(1)$ The usual definition of the basis polytope is based on taking the convex hull of the indicators of subsets of vectors in $U$ that form a basis. $(2)$ The alternative definition we gave in Definition~\ref{def:basis} will be more directly useful for our purposes, and was proven to be equivalent by Edmonds \cite{E}. He used this equivalence to give a separation oracle for the basis polytope, which in turn plays a key role in the algorithm of Hardt and Moitra \cite{HM} for computing the linear transformation that puts a set of vectors into radial isotropic position. 

Now we are ready to prove our main theorem:

\begin{proof}
Let $V = v_1, v_2, \dots, v_n$. Then set $$u_i =\sqrt{\frac{d}{n}} \frac{v_i}{\|v_i\|} + \eta_i$$
where $\eta_i$ is a perturbation. What we need from these perturbations is just that they make every set of $d$ vectors in $U$ be linearly independent, and that if the norm of the perturbation is a large polynomial in $\epsilon$, $1/d$ and $1/n$ that it has a negligible effect on our squared distance bounds. As we go along, we will quantify how small we need the perturbation to be. First we want to bound the squared distance between $V$ and $U$. We can upper bound
$$\mbox{dist}^2(v_i, u_i) \leq \Big (\sqrt{\frac{d}{n}} - \sqrt{(1 - \epsilon)\frac{d}{n}} \Big )^2 + \gamma \leq \frac{\epsilon d}{n}$$
where  $\gamma \leq \|\eta_i \|^2 + 2 \|\eta_i\|$ is a term that depends on the perturbation and if $\gamma \leq \frac{(1- \sqrt{1-\epsilon} ) \epsilon d}{n}$ then the last inequality holds. Now summing over all pairs of vectors we get that $\mbox{dist}^2(V, U) \leq \epsilon d$. 

Next we observe that the vectors in $U$ are still a nearly equal norm Parseval frame. Before adding the perturbation, each vector $v_i$ was scaled by a factor between $\sqrt{1-\epsilon}$ and $\sqrt{1 + \epsilon}$. Also if we take $\|\eta_i \| \leq \frac{\epsilon}{2n}$ for each $i$, then we conclude that the vectors in $U$ are a $4\epsilon$-nearly equal norm Parseval frame. 

Now we will utilize Theorem~\ref{thm:barthe}. We work with the coefficient vector $c$ where $c_i = \frac{d}{n}$ for each $i$. It is easy to check in Definition~\ref{def:basis} that, because every set of $d$ vectors from $U$ are linearly independent, $c$ is in their basis polytope. Hence by Theorem~\ref{thm:barthe} we are guaranteed that there is a linear transformation $A$ that places them in radial isotropic position with respect to $c$. We claim that we can assume without loss of generality that $A$ is a nonnegative diagonal matrix whose entries are sorted in non-increasing order along the diagonal. This follows by writing the singular value decomposition $A = C \Sigma D^T$ and observing that radial isotropic position is preserved under taking orthogonal transformations. Thus $A' = D \Sigma D^T$ also places the vectors in $U$ in radial isotropic position. Now if we change basis so that $A'$ is diagonal (and make the same transformation to the vectors in $U$), we have the desired conclusion. 

Now suppose that $M$ is a diagonal matrix with entries $\lambda_1 \geq \lambda_2 \geq \dots \geq \lambda_d \geq 0$ along the diagonal with the property that it places the vectors in $U$ in radial isotropic position with respect to $c$. Then set
$$W = w_1, w_2, \dots, w_n \mbox{ with } w_i \triangleq \sqrt{\frac{d}{n}} \Big (\frac{M u_i}{\|M u_i\|}\Big )$$
By construction, this is an equal norm Parseval frame.  What remains is to bound the total squared distance between $U$ and $W$. In Lemma~\ref{lem:maintech}, we show that $\mbox{dist}^2(U, W) \leq 8 \epsilon d^2 + 4 \gamma' d^2$, where $\gamma'$ is again another negligible term. Concretely we can choose $\gamma' = \max_i \frac{n}{d}(\|\eta_i \|^2 + 2 \|\eta_i\|)$ when we apply Lemma~\ref{lem:maintech}. 

Finally, for any three vectors $a$, $b$ and $c$ we have the triangle-like inequality $$\| a -c\|_2^2 \leq 2 \Big ( \|a - b\|_2^2 + \|b-c\|_2^2 \Big )$$ and when we apply this for all triples of vectors $v_i$, $u_i$ and $w_i$ we have
$$\mbox{dist}^2(V, W) \leq 2 \mbox{ dist}^2(V, U) + 2 \mbox{ dist}^2(U, W) \leq 20 \epsilon d^2 $$
where the last inequality follows if $\gamma' \leq \epsilon$. This now completes the proof. 
\end{proof}

\begin{lemma}\label{lem:maintech}
With $U$, $M$ and $W$ as defined\footnote{In particular, $U$ is a $4\epsilon$-nearly Parseval frame, $M$ is an entrywise diagonal matrix that puts $U$ in radial isotropic position, and $W$ is defined based on $U$ and $M$.} in the proof of Theorem~\ref{thm:main} and under the condition that for each i $$(1-\gamma')\frac{d}{n}  \leq \|u_i\|_2^2 \leq (1+\gamma')\frac{d}{n} $$ we have that $\mbox{{\em dist}}^2(U, W) \leq 8 \epsilon d^2 + 4 \gamma' d^2$.
\end{lemma}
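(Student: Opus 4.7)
The strategy is to expand $\mbox{dist}^2(U,W)$ coordinate-wise using the diagonality of $M$, use the radial isotropic identities to collapse the expansion to a single cross-term, and finally bound that cross-term by combining Cauchy--Schwarz with the near-Parseval constraints.

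First I would exploit that $M = \mathrm{diag}(\lambda_1,\ldots,\lambda_d)$ and $w_i = \alpha_i M u_i$ with $\alpha_i = \sqrt{d/n}/\|M u_i\|$, so that $(w_i)_k = \alpha_i\lambda_k u_{i,k}$ and hence
$$\|u_i - w_i\|^2 \;=\; \sum_k u_{i,k}^2 (1 - \alpha_i \lambda_k)^2.$$
Expanding $(1-\alpha_i\lambda_k)^2 = 1 - 2\alpha_i\lambda_k + \alpha_i^2\lambda_k^2$, summing over $i$, and using that the definition of $\alpha_i$ gives $\sum_k \alpha_i^2 \lambda_k^2 u_{i,k}^2 = \alpha_i^2 \|Mu_i\|^2 = d/n$ for every $i$, one reaches the identity
$$\mbox{dist}^2(U,W) \;=\; \sum_i \|u_i\|^2 \;+\; d \;-\; 2X, \qquad X \;:=\; \sum_{i,k} u_{i,k}^2 \alpha_i \lambda_k.$$
Combined with the hypothesis $\sum_i \|u_i\|^2 \leq (1+\gamma')d$, the lemma reduces to showing the lower bound $X \geq d - 4\epsilon d^2 - \tfrac{3}{2}\gamma' d^2$.

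Next I would attempt to lower-bound $X$ by applying Cauchy--Schwarz coordinate by coordinate. For each $k$ set $S_k = \sum_i u_{i,k}^2$; then the $(k,k)$-entry of the radial isotropic identity $\sum_i w_i w_i^T = I$ gives $\sum_i \alpha_i^2 \lambda_k^2 u_{i,k}^2 = 1$, and Cauchy--Schwarz applied to the sequences $(u_{i,k})_i$ and $(\alpha_i \lambda_k u_{i,k})_i$ yields
$$\Bigl(\sum_i \alpha_i \lambda_k u_{i,k}^2\Bigr)^2 \;\leq\; S_k \cdot 1 \;=\; S_k.$$
Since $U$ is a $4\epsilon$-nearly Parseval frame, $S_k \in [1-4\epsilon,\,1+4\epsilon]$, and summing over $k$ gives $X \leq d\sqrt{1+4\epsilon}$---an upper bound, which is in the wrong direction.

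The main obstacle is therefore to turn this Cauchy--Schwarz inequality into a matching \emph{lower} bound on $X$. The key observation is that the two matrices $Z_{i,k} := u_{i,k}^2$ and $Z'_{i,k} := \alpha_i^2 \lambda_k^2 u_{i,k}^2$ have row sums and column sums that nearly agree: both row sums lie in $[(1-\gamma')d/n,\,(1+\gamma')d/n]$, and both column sums lie in $[1-4\epsilon,\,1+4\epsilon]$. Yet $Z'$ differs from $Z$ only by the rank-1 scaling $Z'_{i,k}/Z_{i,k} = (\alpha_i\lambda_k)^2$. This near-coincidence of marginals forces $\alpha_i\lambda_k$ to be close to $1$ on the weighted support of $Z$, which is precisely what is needed for the Cauchy--Schwarz bound above to be nearly tight. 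Quantifying this closeness---for example by splitting the sum according to whether $\alpha_i \lambda_k \lessgtr 1$ and invoking the elementary inequality $(1-x)^2 \leq |1-x^2|$ (valid for $x \geq 0$) to relate $\sum Z_{i,k}(1-\alpha_i\lambda_k)^2$ to the total-variation quantity $\sum |Z_{i,k} - Z'_{i,k}| = \sum Z_{i,k}|1-\alpha_i^2\lambda_k^2|$, and then tracking how near-equality of the row and column sums controls how far $Z$ and $Z'$ can differ---is the subtle algebraic step that should yield the lower bound $X \geq d - 4\epsilon d^2 - O(\gamma' d^2)$ and hence the lemma.
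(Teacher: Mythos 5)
Your setup is sound and in fact lands you exactly where the paper's proof starts: after expanding coordinatewise and using $(1-x)^2\le|1-x^2|$, the whole lemma reduces to bounding $\sum_{i,k} Z_{ik}\,|1-\alpha_i^2\lambda_k^2| \;=\;\sum_{i,k}|Z_{ik}-Z'_{ik}|$, the entrywise $\ell_1$ distance between a nonnegative matrix and a rank-one diagonal rescaling of it with nearly the same row and column marginals. But the step you defer --- ``tracking how near-equality of the row and column sums controls how far $Z$ and $Z'$ can differ'' --- is the entire content of the lemma, and as stated it is not true: two nonnegative matrices with identical marginals can be far apart in entrywise $\ell_1$ (e.g.\ two different doubly stochastic matrices), so near-coincidence of marginals alone cannot force $\alpha_i\lambda_k\approx 1$. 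What you would actually need is a quantitative stability version of the uniqueness of Sinkhorn-type scalings, whose constants degrade with the conditioning of $Z$ (its entries can be arbitrarily small), so this route does not obviously yield a bound polynomial in $d$ alone --- it leads back toward the convergence-rate difficulties that the operator-scaling approach of Kwok--Lau--Lee--Ramachandran had to fight through.

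The ingredient you are missing is the one structural fact you never use: $M$ can be taken diagonal with \emph{sorted} entries $\lambda_1\ge\cdots\ge\lambda_d\ge 0$. Because of this, each rescaled row $(\lambda_k^2 Z_{ik})_k$ (after normalizing to preserve the row sum) \emph{majorizes} the original row $(Z_{ik})_k$: the scaling shifts mass toward earlier coordinates. Majorization gives $\|x-y\|_1\le\mathcal{T}(y,x):=\sum_j j(x_j-y_j)$, and $\mathcal{T}$ --- unlike the $\ell_1$ norm --- is \emph{linear}, so summing over $i$ turns the bound into $\sum_j j\cdot(\text{difference of }j\text{th column sums})$, which \emph{is} controlled by the marginal conditions (giving the $4\epsilon d^2+\gamma' d^2$ with $\sum_j j\le d^2$). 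This is why the paper introduces the intermediate frame $\widetilde{W}$ scaled by $\|u_i\|$ rather than $\sqrt{d/n}$: it makes the row sums of $Z$ and $Z'$ agree exactly so that majorization applies, with the $O(\gamma')$ discrepancy absorbed separately. Without the sorting-plus-majorization mechanism (or some substitute for it), your reduction cannot be closed.
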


\begin{proof}
First we introduce a notion of majorization:
\begin{definition}
For $d$ element sequences $x$ and $y$ we say $x \succeq y$ if for all $1 \leq j \leq d$ we have $\sum_{i = 1}^j x_i \geq \sum_{i =1}^j y_i$ and moreover $\sum_{i =1}^d x_i = \sum_{i =1}^d y_i$.
\end{definition}
Next we introduce a notion of distance, similar to the Wasserstein distance, but for vectors that are not necessarily nonnegative:
\begin{definition}
 If $x \succeq y$, we define
$$\mathcal{T}(x, y)  \triangleq \sum_{j =1}^d j (y_j - x_j) = \sum_{j = 1}^d \sum_{i =1}^j x_i - y_i$$
\end{definition}
\noindent We claim that $\|x - y\|_1 \leq \mathcal{T}(x, y)$. What makes this bound subtle is that the vector $x - y$ can (and will) have negative entries. Later on, we will crucially rely on the fact that $\mathcal{T}$ is {\em linear}. But here, we observe that when $x \succeq y$ it is equivalent to the Wasserstein distance $\mathcal{W}(x, y)$ on the real line. In particular
$$\mathcal{T}(x, y) = \sum_{j = 1}^d \Big ( \sum_{i =1}^j x_i \Big ) - \Big ( \sum_{i =1}^j y_i \Big ) = \sum_{j = 1}^d \Big | \Big ( \sum_{i =1}^j x_i \Big ) - \Big ( \sum_{i =1}^j y_i \Big ) \Big |  = \mathcal{W}(x, y)$$
where the last equality follows because the second to last expression is the dual form of the Wasserstein distance (again, on the real line). Finally, because $\mathcal{W}$ measures the transport cost of changing $x$ into $y$ and furthermore each difference between the vectors must be moved distance at least one, we have $\mathcal{W}(x, y) \geq \|x - y\|_1$, which completes the argument.

 Lastly we define some useful sequences of helper vectors. Let $\widetilde{W} = \widetilde{w}_1, \widetilde{w}_2, \dots, \widetilde{w}_n$ with
$$\widetilde{w}_i \triangleq (\|u_i \|) \Big (\frac{M u_i}{\|M u_i\|}\Big )$$
And let $x_i$ be the result of entrywise squaring $u_i$ and let $y_i$ be the result of entrywise squaring $\widetilde{w}_i$. By construction, the sum of the entries in $x_i$ and in $y_i$ are the same. We prove in Claim~\ref{claim:order} that $y_i \succeq x_i$ for each $i$. Now we have
$$\mbox{dist}^2(U, \widetilde{W}) = \sum_{i =1}^n \sum_{j =1}^d \Big ( (u_i)_j - (\widetilde{w}_i)_j \Big )^2 \leq \sum_{i =1}^n \sum_{j =1}^d \Big | (x_i)_j - (y_i)_j \Big | =  \sum_{i =1}^n \| x_i - y_i \|_1 $$
where the first inequality follows because for any real values $a$ and $b$ with the same sign we have $(a-b)^2 \leq |a^2 -b^2|$. Further manipulations give
$$\mbox{dist}^2(U, \widetilde{W}) \leq  \sum_{i =1}^n \mathcal{T}(y_i, x_i) =  \mathcal{T}\Big ( \sum_{i=1}^n y_i, \sum_{i =1}^n x_i\Big) = 2 \sum_{j=1}^d j \Big ( \sum_{i=1}^n ((u_i)_j)^2 - ((\widetilde{w}_i)_j)^2 \Big ) $$
Since $M$ put the vectors $U$ in radial isotropic position with respect to $c$ and the squared norm of each $u_i$ is between $(1-\gamma')\frac{d}{n}$ and $(1+\gamma')\frac{d}{n}$, we have that $\sum_{i =1}^n ((\widetilde{w}_i)_j)^2  \geq 1-\gamma'$. 
And because the vectors $U = u_1, u_2, \dots, u_n$ are a $4 \epsilon$-nearly Parseval frame, we have $\sum_{i =1}^n ((u_i)_j)^2 \leq 1 + 4\epsilon$ which gives
$$\mbox{dist}^2(U, \widetilde{W}) \leq  \sum_{j=1}^d j (4\epsilon + \gamma') \leq 4\epsilon d^2 +  \gamma' d^2 $$
To complete the proof, using the fact that for each $i$, $w_i$ and $\widetilde{w}_i$ differ by a scaling factor whose square is between $1 - \gamma'$ and $1+\gamma'$, we have
$$\mbox{dist}^2(\widetilde{W}, W) \leq 2 \gamma'$$
and putting it all together we get
$$\mbox{dist}^2(U, W) \leq 2\mbox{ dist}^2(U, \widetilde{W}) + 2 \mbox{ dist}^2(\widetilde{W}, W) \leq 8 \epsilon d^2 + 4 \gamma' d^2$$
again using the triangle-like inequality. 
\end{proof}

\begin{claim}\label{claim:order}
With $x_i$ and $y_i$ as defined in Lemma~\ref{lem:maintech}, we have $y_i \succeq x_i$. 
\end{claim}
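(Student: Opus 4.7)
The plan is to unpack the definitions and reduce the majorization claim to a one-line fact: a weighted average of a non-increasing sequence goes down as we include more terms. Recall $M$ is diagonal with entries $\lambda_1 \geq \lambda_2 \geq \dots \geq \lambda_d \geq 0$, so $(Mu_i)_j = \lambda_j (u_i)_j$ and
$$(\widetilde{w}_i)_j \;=\; \|u_i\|\,\frac{\lambda_j (u_i)_j}{\|Mu_i\|}, \qquad (y_i)_j \;=\; \|u_i\|^2\,\frac{\lambda_j^2\,(x_i)_j}{\sum_{k=1}^d \lambda_k^2\,(x_i)_k}.$$
The equal-sum condition $\sum_j (y_i)_j = \sum_j (x_i)_j$ falls out immediately: the sum over $j$ of $(y_i)_j$ telescopes to $\|u_i\|^2$, which also equals $\sum_j (x_i)_j$.

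Next I would show the partial-sum inequalities. Writing $a_k = (x_i)_k \geq 0$ and $\mu_k = \lambda_k^2$ (which is still non-increasing in $k$), the claim $\sum_{k=1}^j (y_i)_k \geq \sum_{k=1}^j (x_i)_k$ becomes, after clearing denominators,
$$\Big(\sum_{k=1}^d a_k\Big)\Big(\sum_{k=1}^j \mu_k a_k\Big) \;\geq\; \Big(\sum_{k=1}^j a_k\Big)\Big(\sum_{k=1}^d \mu_k a_k\Big).$$
Equivalently (assuming the denominators are nonzero, which I will handle as a degenerate case),
$$\frac{\sum_{k=1}^j \mu_k a_k}{\sum_{k=1}^j a_k} \;\geq\; \frac{\sum_{k=1}^d \mu_k a_k}{\sum_{k=1}^d a_k}.$$

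The left side is the weighted average of $\mu_1,\dots,\mu_j$ with weights $a_1,\dots,a_j$, and the right side is the weighted average of $\mu_1,\dots,\mu_d$ with weights $a_1,\dots,a_d$. Because $\mu_1 \geq \mu_2 \geq \dots \geq \mu_d$, every term $\mu_\ell$ with $\ell > j$ is at most each $\mu_k$ with $k \leq j$, so extending the average from $j$ to $d$ terms can only decrease it. This is the essential content of the claim and can be verified in one line by splitting the right-hand numerator and denominator at index $j$ and applying the elementary fact that if $A/B \geq C/D$ with $B,D > 0$ then $A/B \geq (A+C)/(B+D)$.

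The only potential obstacle is the degenerate cases: if $\sum_{k=1}^j a_k = 0$ then $(x_i)_1 = \dots = (x_i)_j = 0$ and $(y_i)_1 = \dots = (y_i)_j = 0$ so both partial sums vanish; and if $\sum_{k=1}^d \mu_k a_k = 0$ then $Mu_i = 0$, but since $M$ puts $U$ in radial isotropic position all $Mu_i$ are nonzero, so this case does not arise. With these checks the proof is complete.
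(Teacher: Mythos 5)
Your proof is correct and follows essentially the same route as the paper: both reduce the partial-sum inequality to a ratio comparison and exploit the monotonicity $\lambda_1 \geq \dots \geq \lambda_d \geq 0$ (the paper bounds the head-to-tail ratio by replacing each $\lambda_k^2$ with $\lambda_j^2$, which is just another phrasing of your weighted-average/mediant argument). Your explicit treatment of the zero-denominator edge cases is slightly more careful than the paper's, but the substance is the same.
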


\begin{proof}
Recall that by construction, $x_i$ and $y_i$ are nonnegative and the sum of their entries is the same. Thus showing that for any $1 \leq j < d$, the inequality
$\sum_{k =1}^j (y_i)_k \geq \sum_{k =1}^j (x_i)_k$ holds follows from showing instead that
$$\frac{\sum_{k =1}^j (y_i)_k}{\sum_{k =j+1}^d (y_i)_k} \geq \frac{\sum_{k =1}^j (x_i)_k}{\sum_{k =j+1}^d (x_i)_k}$$
Now to complete the proof we observe that
\begin{eqnarray*}
\frac{\sum_{k =1}^j (y_i)_k}{\sum_{k =j+1}^d (y_i)_k} &=& \frac{\sum_{k =1}^j  \lambda_k^2 ((u_i)_k)^2}{\sum_{k =j+1}^d \lambda_k^2 ((u_i)_k)^2} \\ 
&\geq& \frac{\sum_{k =1}^j  \lambda_j^2 ((u_i)_k)^2}{\sum_{k =j+1}^d \lambda_j^2 ((u_i)_k)^2} = \frac{\sum_{k =1}^j ((u_i)_k)^2}{\sum_{k =j+1}^d ((u_i)_k)^2} = \frac{\sum_{k =1}^j (x_i)_k}{\sum_{k =j+1}^d (x_i)_k}
\end{eqnarray*}
where the inequality follows from the assumption $\lambda_1 \geq \lambda_2 \geq \dots \geq \lambda_d \geq 0$. 
\end{proof}

\section{An Algorithm for the Paulsen Problem}

Every step of the proof of Theorem~\ref{thm:main} is straightforward to implement algorithmically, except for the step where we compute the transformation $A$ that places the set of vectors $U$ in radial isotropic position. Fortunately, Hardt and Moitra \cite{HM} gave an algorithm for computing $A$ under a slight strengthening of Barthe's conditions which holds in our setting. Informally, they require the vector $c$ to be strictly inside the basis polytope according to the following notion of scaling:

\begin{definition}
Let $(1-\alpha) \mathcal{B}(U)$ denote the set of vectors $c$ with the following properties: $(1)$ $\sum_{i=1}^n c_i = d$ $(2)$ $0 \leq c_i \leq 1$ for all i and $(3)$ for all nonnegative directions $u$ with $u_{min} = 0$, $$(1-\alpha) \max_{v \in \mathcal{B}(U)} u^T v \geq u^T c$$
\end{definition}


We will state a special case of their main theorem, which is sufficient for our purposes. 

\begin{theorem}\cite{HM}\label{thm:radalg}
Let $\delta > 0$ and $\alpha > 0$. Suppose $U = u_1, u_2, \dots, u_n \in \R^d $ has the property that every set of $d$ vectors are linearly independent. Then given $c \in (1-\alpha) \mathcal{B}(U)$, there is an algorithm to find a linear transformation $A$ so that 
$$\sum_{i=1}^n c_i \Big ( \frac{A u_i}{\|A u_i \|} \Big ) \Big (\frac{A u_i}{\|A u_i \|} \Big )^T = I + J$$
where $\|J\|_\infty \leq \delta$ \---- i.e. the largest entry of $J$ in absolute value is at most $\delta$. The running time is polynomial in $1/\alpha$, $\log 1/\delta$ and $L$ where $L$ is an upper bound on the bit complexity of $U$ and $c$. 
\end{theorem}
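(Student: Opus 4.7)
The plan is to reduce the problem to a smooth convex optimization and solve it by the ellipsoid method. The starting point is Barthe's variational characterization: for the coefficient vector $c$, consider the function
$$g(t) = \log \det\Big(\sum_{i=1}^n e^{t_i} u_i u_i^T\Big) - \sum_{i=1}^n c_i t_i$$
on $\R^n$. This function is convex in $t$, and a direct computation shows that at any critical point $t^*$, if we set $M = \sum_i e^{t^*_i} u_i u_i^T$ and $A = M^{-1/2}$, then $\|A u_i\|^2 = c_i e^{-t^*_i}$ and $\sum_i c_i (A u_i / \|A u_i\|)(A u_i / \|A u_i\|)^T = A M A^T = I$. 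So exact radial isotropic position corresponds to an exact minimizer of $g$, and the algorithmic task becomes: approximately minimize $g$ and bound the resulting $\|J\|_\infty$.

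The first issue is that $g$ may fail to attain its infimum when $c$ lies on the boundary of $\mathcal{B}(U)$. This is exactly where the hypothesis $c \in (1-\alpha)\mathcal{B}(U)$ enters. I would use Edmonds' dual description of $\mathcal{B}(U)$ (the definition used in the excerpt) to show that, whenever some coordinate $t_i$ drifts far from the others, there is a direction $u$ realizing the slack inequality along which the gradient $\nabla g(t)$ has a definite component pointing back toward the origin. Quantitatively, this forces the minimizer into a ball of radius $\poly(1/\alpha, L)$ and makes $g$ strongly convex on a sublevel set containing it, with modulus depending polynomially on $\alpha$. The same Edmonds description also supplies the separation oracle for $\mathcal{B}(U)$ (via a matroid/submodular optimization, which is standard), which is what the algorithm ultimately uses to certify $c \in (1-\alpha)\mathcal{B}(U)$ and to restrict the optimization to a well-posed region.

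With these pieces, I would run the ellipsoid algorithm for convex minimization of $g$ on the bounded region, computing $\nabla g$ and the separation oracle at each step in polynomial time and arithmetic of bit-complexity $\poly(L)$. Standard analysis yields an $\epsilon'$-approximate minimizer $\tilde t$ in $\poly(n, 1/\alpha, \log 1/\epsilon', L)$ iterations. I would then set $A = \tilde M^{-1/2}$ and use the strong convexity bound together with a direct calculation expanding $\sum_i c_i (A u_i / \|A u_i\|)(A u_i / \|A u_i\|)^T - I$ in terms of $\nabla g(\tilde t)$ to translate function-value accuracy $\epsilon' = \poly(\delta, \alpha, 1/L)$ into $\|J\|_\infty \leq \delta$. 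The main obstacle I foresee is precisely this last conversion: getting a clean quantitative bound on the Hessian of $g$ from below on the relevant region, uniformly in $c$, using only the $(1-\alpha)$-slack through $\mathcal{B}(U)$. Everything else — existence of the critical point, computability of the gradient, and the separation oracle — is either classical or a direct application of Barthe's theorem and Edmonds' characterization already invoked in the excerpt.
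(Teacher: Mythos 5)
The paper offers no proof of this statement---it is quoted (as a special case) from Hardt and Moitra \cite{HM}---and your sketch is precisely the route taken there: minimize Barthe's convex functional $g(t)=\log\det\big(\sum_i e^{t_i}u_iu_i^T\big)-\sum_i c_it_i$ by the ellipsoid method, using Edmonds' characterization to furnish the separation oracle and the $(1-\alpha)$ slack to confine the minimizer to a ball of radius $\mathrm{poly}(1/\alpha,L)$, exactly as you describe. Your critical-point computation checks out, and the one step you flag as the main obstacle is resolved by the \emph{upper} bound on the Hessian of $g$ (smoothness) rather than by strong convexity: one has the exact identity $J=-\sum_i (\nabla g(\tilde t))_i\cdot \frac{A u_i (A u_i)^T}{\|A u_i\|^2}$ with each $\frac{A u_i (A u_i)^T}{\|A u_i\|^2}$ a rank-one projection, so $\|J\|_\infty\le\|\nabla g(\tilde t)\|_1$, and an $\epsilon'$-approximate minimizer of a $\beta$-smooth convex function satisfies $\|\nabla g(\tilde t)\|^2\le 2\beta\epsilon'$; since $\epsilon'$ need only be polynomially small in $\delta$, the ellipsoid iteration count depends on $\log 1/\delta$ as claimed.
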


By combining their algorithm with our proof of Theorem~\ref{thm:main} we get: 

\begin{corollary}
Suppose $V = v_1, v_2, \dots, v_n \in \R^d$ is an $\epsilon$-nearly equal norm Parseval frame. Furthermore suppose $n > d$. Then given $\delta > 0$, there is an algorithm to compute a $\delta$-nearly equal norm Parseval frame $W$ with $$\mbox{{\em dist}}^2(V, W) \leq 20 \epsilon d^2$$
whose running time is polynomial in $\log 1/\delta$ and $L$ where $L$ is an upper bound on the bit complexity of $V$. 
\end{corollary}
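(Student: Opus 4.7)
The plan is to follow the proof of Theorem~\ref{thm:main} essentially verbatim, replacing the single non-constructive step (Barthe's existence theorem) with the algorithmic Theorem~\ref{thm:radalg}, and then tracking the approximation errors carefully enough to preserve both the $20\epsilon d^2$ distance bound and a sufficient degree of ``Parsevalness'' for the output.

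Concretely, I would proceed as follows. First, build $U = u_1,\dots,u_n$ from $V$ as in the proof of Theorem~\ref{thm:main}, but now with rational perturbations $\eta_i$ generated algorithmically (e.g. by rounding a random perturbation to sufficient bit precision) so that (a) every $d$-subset of $U$ is linearly independent, (b) $\|\eta_i\|$ is small enough that the distance and near-Parseval accounting from Theorem~\ref{thm:main} still goes through, and (c) the bit complexity of $U$ is polynomial in $L$. Second, take $c_i = d/n$; I would then verify that $c \in (1-\alpha)\mathcal{B}(U)$ for some $\alpha$ with $1/\alpha = \poly(n,d)$. Here the hypothesis $n > d$ enters crucially: all $\binom{n}{d}$ subsets of size $d$ are bases, so every inequality $\dim(\spn\{u_i\}_{i\in A}) \geq |A|\cdot d/n$ in Definition~\ref{def:basis} is strict, with a slack that can be bounded from below in closed form as a function of $n,d$ only. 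Third, invoke Theorem~\ref{thm:radalg} at precision $\delta'$ (to be chosen below) to obtain a linear transformation $A$, then diagonalize as in the proof of Theorem~\ref{thm:main} to produce $M$, and finally set $w_i = \sqrt{d/n}\,Mu_i/\|Mu_i\|$.

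For the distance analysis, the point is that Lemma~\ref{lem:maintech} is robust to $A$ placing $U$ in radial isotropic position only up to an entrywise error $\delta'$. Tracing through the chain of inequalities, each appearance of the identity $\sum_i ((\widetilde w_i)_j)^2 = 1$ is weakened to $\sum_i ((\widetilde w_i)_j)^2 \geq 1 - \gamma' - O(\delta')$, and an analogous $O(\delta')$ slack appears when one verifies that $W$ itself is a $\delta$-nearly equal norm Parseval frame (the normalization makes each $\|w_i\|^2 = d/n$ exactly, while the Parseval condition inherits an $O(\delta' n)$-perturbation of the identity). Choosing $\delta'$ to be a sufficiently small polynomial in $\delta,1/n,1/d$ therefore guarantees simultaneously that $\mbox{dist}^2(V,W) \leq 20 \epsilon d^2$ and that $W$ is a $\delta$-nearly equal norm Parseval frame. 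By Theorem~\ref{thm:radalg} the running time is polynomial in $1/\alpha$, $\log 1/\delta'$ and $L$, which is polynomial in $\log 1/\delta$ and $L$ since $1/\alpha$ and $1/\delta'$ are polynomial in $n,d$.

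The main obstacle I expect is the quantitative step of exhibiting an $\alpha = 1/\poly(n,d)$ with $c \in (1-\alpha)\mathcal{B}(U)$: one has to reduce the ``nonnegative direction'' condition in the definition of $(1-\alpha)\mathcal{B}(U)$ to a finite check over subsets of $[n]$ and then argue a uniform slack. This is where the restriction $n > d$ is used in an essential way; on the boundary $n = d$ the coefficient vector $c$ would lie on a facet of $\mathcal{B}(U)$ and no such $\alpha$ could exist. The remaining bookkeeping — ensuring that the $O(\delta')$ perturbations to Lemma~\ref{lem:maintech} stay additive and do not interact badly with $\gamma'$ or $\epsilon$ — is straightforward, since $\gamma'$ can be chosen as small as we like independently of $\delta'$.
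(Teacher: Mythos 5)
Your proposal is correct and follows essentially the same route as the paper: perturb $V$, verify $c \in (1-\alpha)\mathcal{B}(U)$ for polynomially large $1/\alpha$, run the Hardt--Moitra algorithm at a precision $\delta'$ polynomially small in $\delta$, $\epsilon$, $1/d$, and absorb the resulting slack into the analysis of Lemma~\ref{lem:maintech}. The one step you flag as the main obstacle is dispatched in the paper by a two-line computation: for a sorted nonnegative direction with smallest entry zero, $\max_{v \in \mathcal{B}(U)} u^T v = \sum_{i=1}^{d} u_i \geq \big(\tfrac{n}{n-1}\big) u^T c$ because every $d$-subset of $U$ is a basis, which gives $\alpha = 1/n$ directly.
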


\begin{proof}
We perturb $V$ as in the proof of Theorem~\ref{thm:main} and run the algorithm in Theorem~\ref{thm:radalg} on $U$ with $c$ with $c_i = \frac{d}{n}$ for all $i$ and some $\delta'$ to be specified later. First we claim that when every set of $d$ vectors of $u$ are linearly independent and $n > d$, then $c \in (1-\alpha) \mathcal{B}(U)$ for $\alpha = \frac{1}{n}$. Without loss of generality, suppose $u_1 \geq u_2 \geq \cdots \geq u_n = 0$. Then 
$$\max_{v \in \mathcal{B}(U)} u^T v = \sum_{i =1}^d u_i \geq \Big ( \frac{d}{n-1} \Big ) \sum_{i =1}^{n-1} u_i = \Big ( \frac{d}{n-1} \Big ) \sum_{i =1}^n u_i = \Big (\frac{n}{n-1} \Big ) u^T c$$
as desired. Moreover we only needed the perturbation to be polynomially small in $\epsilon$, $1/d$ and $1/n$ and hence we can ensure that its bit complexity is a polynomial in the bit complexity of $V$. 

Finally we can choose $\delta' = \frac{\delta \epsilon}{d^3}$ so that the output is a $\frac{\delta \epsilon}{d}$-nearly equal norm Parseval frame. Finally note that our bound on the squared distance between $V$ and $W$ in Lemma~\ref{lem:maintech} used the fact that $W$ was an equal norm Parseval frame. But it is easy to see that the slack in the bounds we used can accommodate a $\frac{\delta \epsilon}{d}$--nearly equal norm Parseval frame instead. 
\end{proof}

This answers an open question of \cite{KLLR}, where they ask whether there is an algorithm for finding an equal norm Parseval frame up to some precision $\delta$ whose running time is polynomial in $\log 1/\delta$. 

\subsubsection*{Acknowledgements}

We thank Avi Wigderson for numerous helpful comments on an earlier draft.

\end{document}